\newtheorem{theorem}{\rm\bf Theorem}[section]
\newtheorem{lemma}[theorem]{\rm\bf Lemma}
\newtheorem*{theorem 1}{\rm\bf Proposition 1}
\newtheorem*{theorem 2}{\rm\bf Proposition 2}
\newtheorem*{proposition 1}{\rm\bf Proposition 3.2}
\theoremstyle{definition}
\newtheorem{definition}[theorem]{\rm\bf Definition}
\theoremstyle{remark}
\newtheorem{remark}[theorem]{\rm\bf Remark}
\newtheorem{example}[theorem]{\rm\bf Example}
\def\interieur#1{\mathord{\mathop{\kern 0pt #1}\limits^\circ}}
\definecolor{NoteColor}{rgb}{1,0,0}
\title[Spherical arc-length as a global conformal parameter]{Spherical arc-length as a global conformal parameter for analytic curves in the Riemann sphere}
\author{Paul Gauthier, Vassili Nestoridis and Athanase Papadopoulos}
\address{Paul Gauthier, D\'epartement de math\'ematiques et de statistique, Universit\'e de Montr\'eal,  Pavillon Andr\'e-Aisenstadt, 2920, chemin de la Tour, Montr\'eal (Qu\'ebec)  H3T 1J4 Canada, 
 email: 
 {\rm gauthier@dms.umontreal.ca}}
\address{Vassili Nestoridis, Department of Mathematics, University of Athens, Department of mathematics, 15784 Panepistemioupolis, Athens, Greece,
 email: 
 {\rm vnestor@math.uoa.gr}}
 \address{Athanase Papadopoulos,
Institut de Recherche Math\'ematique Avanc\'ee, Universit\'e de Strasbourg et CNRS,
7 rue Ren\'e Descartes,
67084 Strasbourg Cedex France,
 email: {\rm papadop@math.unistra.fr} }
\date{\today}
\thanks{This research was supported through the programme ``Research in Pairs"  of the Mathematisches Forschungsinstitut Oberwolfach in 2016, as well as by NSERC (Canada) }
\begin{document}

\maketitle


\begin{abstract} We prove that for every analytic curve in the complex plane $\mathbb{C}$, Euclidean and spherical arc-lengths are global conformal parameters. We also prove that for any analytic curve in the hyperbolic plane,  hyperbolic arc-length is also a global parameter. We generalize some of these results to the case of analytic curves in $\mathbb{R}^n$ and  $\mathbb{C}^n$ and we discuss the situation of curves in the Riemann sphere $\mathbb{C}\cup\{\infty\}$. 
\end{abstract}

\noindent AMS Mathematics Subject Classification: 30B40, 32B15
\medskip

\noindent Keywords: Analytic curve, regular curve, global parameter, conformal parameter, arc-length, analytic extension, maximal extension, spherical arc-length, hyperbolic arc-length.
\medskip

%
%
%
%
%
                 
\section{Introduction}

The subject of this paper is the interaction between real analytic curves and complex geometry. The results are in the direction of those obtained in the papers \cite{NP1, NP2}.

In the papers \cite{Bolkas} and \cite{Bolkas2}, the authors obtained results on the parametrization of an analytic Jordan curve induced by the Riemann mapping theorem from the open unit disc $\mathbb{D}$ onto the interior of that curve. A natural question was addressed, namely, whether the same results are valid when the arc-length parametrization of the curve is used. This led two of the authors of the present paper to prove that arc-length is a global conformal parameter for any analytic curve \cite{NP1, NP2}. Thus, nothing changes in the results of  \cite{Bolkas, Bolkas2} if we use the arc-length parametrization of the curve considered.

On the other hand, it may happen that with a particular conformal parametrization of an analytic curve, it is not clear whether one can further extend this curve analytically to a domain larger than its domain of definition.  It was shown in \cite{NP1} and \cite{NP2} that in order to be sure whether such an extension is possible or not, one may simply use the arc-length parametrization, and examine whether the extendability for this parametrization is possible.

In the present paper, we show that also spherical arc-length is a global conformal parameter for any analytic curve in $\mathbb{C}$. Moreover, for analytic curves in the upper half-plane model of hyperbolic geometry, we show that hyperbolic length is also a global conformal parameter. These results are proved in \S \ref{s:analytic} and are corollaries of a more general theorem involving holomorphic functions in several complex variables, where each variable is replaced by a derivative of the complexification of the function $\gamma$ defining the analytic curve or by the complex conjugate of a derivative of $\gamma$ evaluated at the complex conjugate $\overline{z}$ of the variable $z$ in $\mathbb{C}$.

In \S \ref{s:n}, we extend the result of \S \ref{s:analytic} to analytic curves in $\mathbb{R}^n$ and $\mathbb{C}^n$. Essentially, the same methods of proof work.

In \S \ref{s:ext}, we consider the notion of analytic curve in the Riemann sphere $\mathbb{C}\cup\{\infty\}$. The more general form of this definition is that locally the function $\gamma$ is the restriction of an injective meromorphic function. We show that the spherical arc-length of such a curve is a global conformal parameter. It follows that the maximal extension of such a curve parametrized by spherical arc-length is always defined on an open subinterval of $\mathbb{R}=(-\infty,+\infty)$ and cannot contain the point $+\infty$. This contrasts with what happens if we use other parametrizations, for instance the Euclidean arc-length. We note that these parametrizations by Euclidean arc-length or spherical arc-length should define strictly increasing functions, and in most cases we are led to allow the parameter to take negative values, although we call it arc-length parameter or spherical arc-length parameter (see \cite{NP2}, beginning of \S,3, where this question is discussed). 

Finally, we mention that our definitions are special cases of more general definitions of analytic curves on Riemann surfaces, but we do not insist on that in the present paper.

In this paper, by a \emph{conformal} mapping we mean a locally conformal mapping, that is, holomorphic and locally injective. (Equivalently, the map is holomorphic with nonzero derivative, \cite{Ahlfors}.)


\section{Analytic curves in $\mathbb{C}$}\label{s:analytic}
\begin{definition}
Let $I\subset \mathbb{R}$ be a nonempty interval of an arbitrary type which is not a singleton and let $\gamma:I\to \mathbb{C}$ be a continuous map. We say that $\gamma$ is an \emph{analytic curve} if $\gamma$ is differentiable on $I$ and $\gamma'(t)\not=0$ for all $t\in I$ and if for every $t_0\in I$ there exists $\delta=\delta_{t_{0}}>0$ and a power series $\sum_{n=0}^\infty a_n (t-t_0)^n$, $a_n\in \mathbb{C}$, which is convergent on $(t_0-\delta,t_0+\delta)$ and such that $\gamma(t)=\sum_{n=0}^\infty a_n(t-t_0)^n$ for all $t\in I\cap(t_0-\delta,t_0+\delta)$. 
\end{definition}

It follows from this definition that $\gamma$ is locally injective and that its maximal analytic extension is defined on an open subinterval of $\mathbb{R}$, see \cite{NP2}. It is also true that if we restrict $\gamma$ to a compact interval $I_0$ such that $\gamma_{\vert I_0}$ is injective, then $\gamma$ has an injective holomorphic extension to an open neighborhood $V\subset \mathbb{C}$ of $I_0$ (see \cite{NP1}, \cite{NP2}). In fact, this local extendability to an injective holomorphic function is equivalent to the definition of an analytic curve \cite{Ahlfors, Gamelin, MO}.

Let $\gamma:[a,b]\to \mathbb{C}$
 be an analytic curve and $N\geq 1$ a natural number. We consider the image in $\mathbb{C}^{2N}$ of $[a,b]$ by the map
\begin{equation*}
\Gamma(t)=\big(\gamma(t),\overline{\gamma(\overline{t}}),
\gamma'(t),\overline{\gamma'(\overline{t}}),
\ldots,
\gamma^{(N)}(t),\overline{\gamma^{(N)}(\overline{t}})\big).
\end{equation*}
(Note that since $t$ is real, we have $\overline{t}=t$, but the  above notation is also meant to  deal 
with the case where $t$ is complex, which we shall consider shortly.)

We denote by $J\subset \mathbb{C}^{2N} $ the image of $[a,b]$ by the map $\Gamma$.

We now consider an open subset $\Omega$ of $\mathbb{C}^{2N} $ containing $J$, and a holomorphic map 
\[F:\Omega\to \mathbb{C}\]
such that $F(J)\subset (0,+\infty)$.

We set 
\[S(t)=\int_a^t F\big(\Gamma(u)\big)du\]
and we let $[A,B]$ be the image of $[a,b]$ by $S$.
Then, $S$ is a strictly increasing and continuous function from $[a,b]$ onto $[A,B]$. Therefore it has an inverse 
\[t=S^{-1}(s),\]
and  we can consider a new parametrization of the initial curve $\gamma$ by setting 
\
\[\delta(s)=\gamma(S^{-1}(s)).\]

\begin{theorem}\label{th:1}
The curve $\delta$ is analytic.
\end{theorem}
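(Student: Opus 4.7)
The plan is to prove Theorem~\ref{th:1} by complexifying everything in sight and then invoking the holomorphic inverse function theorem. Since $\gamma:[a,b]\to\mathbb{C}$ is analytic on a compact interval, the excerpt already tells us that (possibly after subdividing $[a,b]$ into finitely many injective subarcs) $\gamma$ extends to an injective holomorphic map $\widetilde{\gamma}:V\to\mathbb{C}$ on an open neighborhood $V\subset\mathbb{C}$ of $[a,b]$. Consequently each derivative $\gamma^{(k)}$ extends holomorphically as $\widetilde{\gamma}^{(k)}$ on $V$. The terms of the form $\overline{\gamma^{(k)}(\overline{t})}$ must be handled differently: viewing $t$ as a complex variable $z$, the function $z\mapsto \overline{\widetilde{\gamma}^{(k)}(\overline{z})}$ is holomorphic on $\overline{V}=\{z:\overline z\in V\}$, by the standard Schwarz-reflection observation that conjugation twice restores holomorphy. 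Since $V$ may be shrunk to a symmetric neighborhood of $[a,b]$ (which is itself real), we obtain a holomorphic extension
\[
\widetilde{\Gamma}(z)=\bigl(\widetilde{\gamma}(z),\overline{\widetilde{\gamma}(\overline z)},\widetilde{\gamma}'(z),\overline{\widetilde{\gamma}'(\overline z)},\ldots,\widetilde{\gamma}^{(N)}(z),\overline{\widetilde{\gamma}^{(N)}(\overline z)}\bigr)
\]
of $\Gamma$ to $V$, agreeing with $\Gamma$ on $[a,b]$.

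Next I would observe that $\widetilde{\Gamma}([a,b])=J\subset\Omega$, so by compactness of $J$ and openness of $\Omega$, continuity of $\widetilde{\Gamma}$ lets me shrink $V$ to a (still open) neighborhood of $[a,b]$ with $\widetilde{\Gamma}(V)\subset\Omega$. Composing with the holomorphic $F$, the function $f:=F\circ\widetilde{\Gamma}:V\to\mathbb{C}$ is holomorphic. Fixing a simply connected subneighborhood of $[a,b]$ (a thin horizontal strip will do), define
\[
\widetilde{S}(z)=\int_a^z f(u)\,du,
\]
the integral being path-independent. Then $\widetilde{S}$ is holomorphic, and $\widetilde{S}\big|_{[a,b]}=S$. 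By construction $\widetilde{S}'(t)=F(\Gamma(t))>0$ for $t\in[a,b]$; so $\widetilde{S}'$ is nowhere zero on $[a,b]$ and, by continuity, on some open neighborhood $V'\subset V$ of $[a,b]$.

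The holomorphic inverse function theorem then gives, for each $t_{0}\in[a,b]$, a local holomorphic inverse of $\widetilde{S}$ near $s_{0}=S(t_{0})\in[A,B]$. Piecing these local inverses together along the compact interval $[A,B]$ (they agree with the strictly-increasing real inverse $S^{-1}$ on the real axis, hence with each other on overlaps by the identity principle) produces a holomorphic extension $\widetilde{S^{-1}}$ of $S^{-1}$ on an open neighborhood $W\subset\mathbb{C}$ of $[A,B]$ with $\widetilde{S^{-1}}(W)\subset V$. Finally, the composition
\[
\widetilde{\delta}(s):=\widetilde{\gamma}\bigl(\widetilde{S^{-1}}(s)\bigr)
\]
is holomorphic on $W$ and restricts on $[A,B]$ to $\delta$; moreover $\widetilde{\delta}'(s)=\widetilde{\gamma}'(\widetilde{S^{-1}}(s))\cdot(\widetilde{S^{-1}})'(s)\neq 0$ since both factors are nonzero. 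Hence $\delta$ is an analytic curve.

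The only non-routine step is the second one above, verifying that the conjugate components $\overline{\gamma^{(k)}(\overline t)}$ genuinely extend \emph{holomorphically} in $t$; everything else is a standard application of the holomorphic inverse function theorem and identity principle. Once this complexification is in hand, analyticity of $\delta$ follows immediately.
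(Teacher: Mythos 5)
Your proof is correct and follows essentially the same route as the paper: complexify $\gamma$ and the conjugate-reflected components, compose with $F$, take a holomorphic primitive with nonvanishing derivative along $[a,b]$, and invert. The only cosmetic difference is at the inversion step, where the paper cites \cite{NP1,NP2} to get a rectangle on which the extended $S$ is globally injective, while you patch local holomorphic inverses together along $[A,B]$ using the identity principle --- both are standard and equivalent here.
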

In other words, the curve $\gamma$  remains analytic with respect to this new parametrization.
\begin{proof}
Since the map $\gamma$ is locally injective, we can restrict $\gamma$ to a smaller interval to obtain an injective map. Without loss of generality we may assume that $\gamma$ is injective on $[a,b]$. 
Therefore the curve $\gamma$ has an injective holomorphic extension on a neighborhood $V\subset\mathbb{C}$ of $[a,b]$. We again call $\gamma$ this extension.
 
$F(\Gamma(t))$ is defined on $[a,b]$. There exists a neighborhood $V_1$ of $[a,b]$ in $\mathbb{C}$ which is contained in $V$, such that the the following function has a holomorphic extension in that neighborhood:
\[G(z)= F(\gamma(z),\overline{\gamma(\overline{z})},
\gamma'(z),\overline{\gamma'(\overline{z})},
\ldots,
\gamma^{(N})(z),\overline{\gamma^{(N)}(\overline{z})}
.\]

We can choose this neighborhood $V_1$ of $[a,b]$ in $\mathbb{C}$ (the domain of $z$) to be convex, e.g. a rectangle. Thus, the function $G$ has a holomorphic primitive (which we shall also call $S$) in $V_1$, and the primitive may be chosen so that it coincides with $S$ at the point $a$ (that is, it takes the value 0 at that point). 
Thus, this function $S$ is an extension, on $V_1$ of the initial function $S$ defined on $[a,b]$.

The restriction of $S$ to $[a,b]$ is injective. The derivative of $S$ is $F(\Gamma(t))$ and is nonzero on $[a,b]$, since we have $F(J) \subset (0,+\infty)$. We then prove as in the papers \cite{NP1} and \cite{NP2} that we can take a smaller neighborhood of $[a,b]$ which is a rectangle such that the restriction of $S$ to that rectangle is injective.

This shows that the new parametrization of the initial curve $\delta$ is an analytic function.
The proof is complete.
\end{proof}
 
 Theorem \ref{th:1} also follows from the following lemma:

\begin{lemma}
Let $f$ be analytic on  $[0,b].$ Then 
$$	
	S(t) = \int_0^t f(u)du
$$
is analytic and hence extends holomorphically to a neighborhood of $[0,b].$ If $f>0,$ then this extension is locally conformal in a (possibly smaller) neighborhood of $[0,b].$
\end{lemma}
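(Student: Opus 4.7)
The plan is to upgrade the real-analytic function $f$ to a holomorphic function in a complex neighborhood of $[0,b]$, integrate it there, and then observe that the nonvanishing of $f$ propagates to a (possibly smaller) neighborhood.

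First I would produce the holomorphic extension of $f$. For each $t_0\in[0,b]$, real analyticity gives a convergent power series $\sum a_n(t-t_0)^n$ on some interval $(t_0-\delta_{t_0},t_0+\delta_{t_0})$; the same series is then holomorphic on the open disc $D(t_0,\delta_{t_0})\subset\mathbb{C}$. By compactness of $[0,b]$, finitely many such discs cover the interval, and on the overlaps the local extensions agree with $f$ on an interval, hence coincide by the identity principle. Gluing yields a holomorphic function $\tilde f$ on an open neighborhood $V$ of $[0,b]$. Inside $V$ I would then choose a smaller open rectangle $R$ containing $[0,b]$; $R$ is convex, hence simply connected.

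Next I would define the primitive. Since $R$ is simply connected, $\tilde f$ admits a holomorphic primitive on $R$, and among these primitives there is a unique one $\tilde S$ with $\tilde S(0)=0$, for example
\[
\tilde S(z)=\int_{[0,z]} \tilde f(w)\,dw,
\]
the integral being taken along the segment from $0$ to $z$, which lies in $R$. By construction $\tilde S$ is holomorphic on $R$, and for real $t\in[0,b]$ it reduces to $\int_0^t f(u)\,du = S(t)$. This proves that $S$ is itself real-analytic on $[0,b]$ and extends holomorphically to the neighborhood $R$ of $[0,b]$.

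For the second assertion, suppose $f>0$ on $[0,b]$. Then $\tilde S'=\tilde f$ is continuous on $R$, and takes strictly positive (in particular, nonzero) values on the compact set $[0,b]$. By continuity, the open set $\{z\in R : \tilde f(z)\neq 0\}$ contains $[0,b]$, so there is an open subset $R'\subset R$ with $[0,b]\subset R'$ on which $\tilde f$ does not vanish. Hence $\tilde S$ is holomorphic on $R'$ with nowhere-vanishing derivative, which by the convention recalled at the end of the introduction means it is locally conformal on $R'$. There is no real obstacle here; the only mildly delicate point is choosing the extension domain simply connected so that the primitive is single-valued, and this is handled by shrinking $V$ to a rectangle.
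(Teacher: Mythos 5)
Your proof is correct and follows essentially the same route as the paper's: the paper simply antidifferentiates the local power series of $f$ term by term to exhibit $S$ as a convergent power series near each $t_0$, while you carry out explicitly the complexification, gluing by the identity principle, and choice of a simply connected rectangle that the paper's phrase ``hence extends holomorphically'' leaves implicit. The conformality claim is handled identically in both: $\tilde S'=\tilde f$ is nonzero on the compact set $[0,b]$, hence on a smaller neighborhood by continuity.
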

\begin{proof} 
$$
	S^\prime(t) = f(t) = \sum_{k=0}^\infty a_k(t-t_0)^k, \quad |t-t_0|<\delta_0,
$$
$$
	S(t)  = \sum_{k=0}^\infty \frac{a_k}{k+1}(t-t_0)^{k+1}, \quad |t-t_0|<\delta_0.
$$
\end{proof}
Note that if $f$ is positive, then $S'$ is also positive, so the extension is locally conformal  in a smaller neighborhood.

Now, let $\gamma:[0,b]\rightarrow \mathbb{C}$ be an analytic curve. Let $\Gamma:[0,b]\rightarrow \mathbb{C}^n$ be the analytic curve $\Gamma = (\gamma,\gamma^\prime,\ldots,\gamma^{(n)}).$ Suppose $F$ is holomorphic on a neighborhood of $J=f([0,b])$ and positive on $J.$ Define the \emph{length} (or \emph{$F$-length}) of $\gamma([0,t])$ as 
$$	
	S(t) = \int_0^t (F\circ\Gamma)(u)du.
$$
We may apply the lemma to $S,$ with $f=F\circ\Gamma.$ We obtain that length is a conformal parameter.  
Thus, we get another proof of Theorem \ref{th:1}.

We now give three applications of Theorem \ref{th:1}, in the form of examples.

\begin{example}[Euclidean arc-length]
We consider the particular case where $F$ is the function
\[F(z)=\sqrt{z_3z_4}\] for
$z=(z_1,z_1, z_3,z_4)\in \mathbb{C}^4$, with $F$ defined on 
\[\Omega=\{(a_1,z_2,z_3,z_4)\in \mathbb{C}^4 : \ \mathrm{Re}z_3>0 , \  \mathrm{Re}z_4>0.\}\]
Note that in this case we have $z_3z_4\not\in (-\infty,0]$, therefore the square root function is well defined and holomorphic, and the function $F\circ \Gamma$ is (locally) conformal.

Then, we have 
\[(F\circ \Gamma)(t)=\vert \gamma'(t)\vert=\sqrt{\gamma'(t). \overline{\gamma'(\overline{t})}}\]
and
\[G(z)=\sqrt{\gamma'(z). \overline{\gamma'(\overline{z})}}.\]
Theorem \ref{th:1} gives another proof of Theorem 3.1 of \cite{NP2} whose statement is the following:

\begin{theorem}
Arc-length is a global conformal parameter for any analytic curve in $\mathbb{C}$.
\end{theorem}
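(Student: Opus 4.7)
The plan is to deduce the theorem as a direct application of Theorem \ref{th:1}, choosing $F$ so that the resulting reparametrization is exactly the Euclidean arc-length. I would take $N = 1$, so that
\[
\Gamma(t) = \big(\gamma(t),\ \overline{\gamma(\overline{t})},\ \gamma'(t),\ \overline{\gamma'(\overline{t})}\big),
\]
and set $F(z_1,z_2,z_3,z_4) = \sqrt{z_3 z_4}$. For real $t$, the product $\gamma'(t) \cdot \overline{\gamma'(\overline{t})}$ simplifies to $|\gamma'(t)|^2$, which is strictly positive because $\gamma'(t)\ne 0$ by the definition of an analytic curve. Consequently $F(\Gamma(t)) = |\gamma'(t)|$, so $S(t) = \int_a^t F(\Gamma(u))\,du$ is precisely the arc-length of $\gamma|_{[a,t]}$, and the new parameter $s$ coincides with the arc-length parameter. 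Once $F$ is realised as a genuine holomorphic function on an open $\Omega \supset J$ with $F(J)\subset (0,+\infty)$, Theorem \ref{th:1} delivers the conclusion at once.

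The step requiring care is the construction of $\Omega$. The sector $\{\operatorname{Re} z_3 > 0,\ \operatorname{Re} z_4 > 0\}$ displayed in Example 2.3 is too narrow for a general analytic curve, because $\gamma'(t)$ may take values with arbitrary argument or even wind around $0$. The key observation, however, is that although the lifted curve $t \mapsto (\gamma'(t), \overline{\gamma'(\overline{t})})$ in $\mathbb{C}^2$ can trace a complicated path, the product $z_3 z_4$ evaluated along it equals $|\gamma'(t)|^2$ and hence stays on the positive real axis. By continuity and compactness of $[a,b]$, there exists an open tubular neighborhood $U\subset\mathbb{C}^2$ of this curve such that the multiplication map $(z_3,z_4)\mapsto z_3 z_4$ sends $U$ into $\mathbb{C}\setminus(-\infty,0]$. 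Taking $\Omega$ to be the preimage of $U$ under projection onto the last two coordinates, the principal branch of $\sqrt{z_3 z_4}$ defines a holomorphic function $F$ on $\Omega$ that is positive on $J$.

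With this choice all hypotheses of Theorem \ref{th:1} are met, so the reparametrization $\delta(s) = \gamma(S^{-1}(s))$ is an analytic curve, which is exactly the statement to be proved. The main obstacle is the global construction of $\Omega$; once this is available, the verification is a one-line substitution. An alternative route that avoids the construction altogether is to argue locally: since analyticity of $\delta$ is a local property, one may, near any fixed $t_0$, pre-compose $\gamma$ with the rotation $\gamma \mapsto e^{-i\arg\gamma'(t_0)}\gamma$ (which preserves both arc-length and analyticity) to land in the sectorial domain used in Example 2.3, and then invoke Theorem \ref{th:1} directly.
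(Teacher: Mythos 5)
Your argument is the same as the paper's: Example 2.3 obtains this theorem from Theorem \ref{th:1} by taking $N=1$ and $F(z_1,z_2,z_3,z_4)=\sqrt{z_3z_4}$, exactly as you do, and the computation $F(\Gamma(t))=\sqrt{\gamma'(t)\cdot\overline{\gamma'(\overline{t})}}=\vert\gamma'(t)\vert$ is identical. Where you genuinely add something is in the treatment of $\Omega$, and your concern is justified: the set $\{\mathrm{Re}\,z_3>0,\ \mathrm{Re}\,z_4>0\}$ written in the paper contains the point $(\gamma'(t),\overline{\gamma'(t)})$ only when $\mathrm{Re}\,\gamma'(t)>0$, which already fails for $\gamma(t)=it$, so as literally stated the example does not verify the hypothesis $J\subset\Omega$ of Theorem \ref{th:1} for a general analytic curve. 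Your repair is correct and is what the paper implicitly intends: along $J$ the product $z_3z_4$ equals $\vert\gamma'(t)\vert^2$, which by compactness of $[a,b]$ and continuity of $\gamma'$ lies in a compact subset of $(0,+\infty)\subset\mathbb{C}\setminus(-\infty,0]$, so the preimage of $\mathbb{C}\setminus(-\infty,0]$ under $(z_3,z_4)\mapsto z_3z_4$ gives an open $\Omega\supset J$ on which the principal branch of $\sqrt{z_3z_4}$ is holomorphic and positive on $J$. Your alternative local argument (rotate by $e^{-i\arg\gamma'(t_0)}$ near each $t_0$, which preserves $\vert\gamma'\vert$, and use that analyticity of the reparametrization is a local property) also works. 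In short: same route as the paper, with an imprecision in the paper's own choice of $\Omega$ correctly identified and filled.
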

\end{example}

\begin{example}[Spherical arc-length]
The derivative of spherical arc-length with respect to the parameter $t$ of a curve is given by the following formula:
\[\frac{\vert\gamma'(t)\vert}{1+\vert\gamma(t)\vert^2}=\frac{\sqrt{\gamma'(t).\overline{\gamma'(\overline{t}})}}{1+\gamma(t)\overline{\gamma(\overline{t})}}.
\]
Thus, in this case the function $F$ of Theorem \ref{th:1} is:
\[F(z_1,z_2,z_3, z_4)=\frac{\sqrt{z_3z_4}}{1+z_1z_2},\]
defined on 
\[\{(z_1,z_2,z_3,z_4)\in\mathbb{C}^4: \mathrm{Re} z_3>0, \ \mathrm{Re} z_4>0, \  \mathrm{Re}z_1z_2>-1/2\}.\]

Theorem \ref{th:1} implies in this case the following:

\begin{theorem}
For any analytic curve in $\mathbb{C}$, spherical arc-length is a global and conformal parameter.
\end{theorem}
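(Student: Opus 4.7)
The plan is to apply Theorem \ref{th:1} with $N=1$ and the specific function $F(z_1,z_2,z_3,z_4) = \sqrt{z_3 z_4}/(1+z_1 z_2)$ written down in the example. Theorem \ref{th:1} does all of the heavy lifting, so the remaining task is only to verify that this $F$ satisfies its two hypotheses: holomorphy on an open set $\Omega \subset \mathbb{C}^4$ containing $J = \Gamma([a,b])$, and positivity $F(J) \subset (0,+\infty)$.

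First I would fix a compact subinterval $[a,b]$ of the domain of $\gamma$ on which $\gamma$ is injective; the result on $[a,b]$ will give the result on the whole domain of $\gamma$ by covering it with such subintervals. For real $t$ one has $\overline{t}=t$, so $\Gamma(t) = (\gamma(t), \overline{\gamma(t)}, \gamma'(t), \overline{\gamma'(t)})$. On $J$ the coordinates therefore satisfy $z_1 z_2 = |\gamma(t)|^2 \geq 0$ and $z_3 z_4 = |\gamma'(t)|^2 > 0$, the latter strict by the non-vanishing-derivative condition built into the definition of an analytic curve.

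Next I would produce the open set $\Omega$. On $J$ the value $z_3 z_4$ is strictly positive real and therefore uniformly away from the branch cut $(-\infty,0]$, so the principal branch of the square root is holomorphic on a complex neighborhood; also $1+z_1 z_2 = 1+|\gamma(t)|^2 \geq 1 \neq 0$, so the denominator is nonzero on a complex neighborhood. By continuity, both properties persist in some open neighborhood $\Omega$ of $J$ in $\mathbb{C}^4$, on which $F$ is holomorphic. A direct evaluation on $J$ yields $F(\Gamma(t)) = |\gamma'(t)|/(1+|\gamma(t)|^2)$, the classical density of spherical arc-length, which is strictly positive; hence $F(J) \subset (0,+\infty)$.

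Theorem \ref{th:1} now applies and delivers that $\delta(s) = \gamma(S^{-1}(s))$ is analytic, where $S(t) = \int_a^t |\gamma'(u)|/(1+|\gamma(u)|^2)\,du$ is the spherical arc-length. Since $S'(t) = F(\Gamma(t)) > 0$, the reparametrization is locally injective as well, hence conformal in the sense of the paper's convention. The main subtlety to watch is making the branch of the square root in the definition of $F$ unambiguous on $\Omega$, but this is painless: $z_3 z_4$ stays in a positive real interval along $J$, so a single branch works throughout a tubular neighborhood and no monodromy issues arise.
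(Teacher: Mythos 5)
Your proposal is correct and follows essentially the same route as the paper: both apply Theorem~\ref{th:1} with $N=1$ and $F(z_1,z_2,z_3,z_4)=\sqrt{z_3z_4}/(1+z_1z_2)$, checking that on $J$ one has $z_3z_4=|\gamma'(t)|^2>0$ and $1+z_1z_2=1+|\gamma(t)|^2\geq 1$. The only (harmless) difference is that you obtain $\Omega$ as a small neighborhood of $J$ by continuity, whereas the paper writes down an explicit domain $\{\mathrm{Re}\,z_3>0,\ \mathrm{Re}\,z_4>0,\ \mathrm{Re}(z_1z_2)>-1/2\}$; your version is arguably safer, since the paper's condition $\mathrm{Re}\,z_3>0$ need not hold on all of $J$ when $\gamma'(t)$ has nonpositive real part.
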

\end{example}

\begin{example}[Hyperbolic arc-length]
We consider an analytic curve $\gamma$ in the hyperbolic plane $\mathbb{H}^2$, and we use the upper half-space model:
\[\mathbb{H}^2=\{w=x+iy \ : \ y=\mathrm{Im}(w) >0\}.\]

The derivative of arc-length is given by the formula
\[
\frac{\sqrt{\big(\frac{dx}{dt}(\gamma(t))\big)^2+\big(\frac{dy}{dt}(\gamma(t))\big)^2}}{y(\gamma(t))}=\frac{\vert\gamma'(t)\vert}{\displaystyle \frac{\gamma(t)-\overline{\gamma(\overline{t})}}{2i}}
=\frac{\sqrt{\gamma'(t).\overline{\gamma'(\overline{t})}}}{\gamma(t)-\overline{\gamma(\overline{t})}}2i.
\]

Therefore, here we consider the function
\[F(z_1,z_2,z_3,z_4)=\frac{\sqrt{z_3z_4}}{z_1-z_2}2i\]
defined on 
\[\{(z_1,z_2,z_3,z_4)\in\mathbb{C}^4 : \mathrm{Re}z_3>0, \ \mathrm{Re}z_4>0,\ 
\mathrm{Im}\big(\frac{z_1-z_2}{2i}\big)>0\}.\]
Note that $\mathrm{Re}(z_1-z_2)<0$ is equivalent to $\mathrm{Im}\left(\frac{z_1-z_2}{2i}\right)>0$.

We deduce the following:

\begin{theorem}
In hyperbolic space, hyperbolic arc-length is a global and conformal parameter.
\end{theorem}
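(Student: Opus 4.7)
The plan is to apply Theorem~\ref{th:1} to the explicit function
\[F(z_1,z_2,z_3,z_4)=\frac{\sqrt{z_3z_4}}{z_1-z_2}\cdot 2i\]
exhibited just above the statement, so the proof reduces to verifying the two hypotheses of Theorem~\ref{th:1}: that $F$ is holomorphic on an open neighborhood $\Omega\subset\mathbb{C}^4$ of $J=\Gamma([a,b])$, where $\Gamma(t)=(\gamma(t),\overline{\gamma(\overline{t})},\gamma'(t),\overline{\gamma'(\overline{t})})$, and that $F(J)\subset(0,+\infty)$.

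First, I would restrict to a compact sub-interval $[a,b]\subset I$ on which $\gamma$ is injective; this is legitimate because the conclusion is local in the parameter and then propagates globally by matching parameters on overlapping sub-intervals, as in the Euclidean arc-length case. Evaluating $\Gamma$ at a real point $t\in[a,b]$ and using $\overline{t}=t$, one obtains $(z_1,z_2,z_3,z_4)=(\gamma(t),\overline{\gamma(t)},\gamma'(t),\overline{\gamma'(t)})$, whence $z_3z_4=|\gamma'(t)|^2>0$ and $z_1-z_2=2i\,\mathrm{Im}(\gamma(t))\neq 0$, the latter because $\gamma$ takes values in $\mathbb{H}^2$. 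Both of these conditions are open, so they persist on a neighborhood $\Omega$ of $J$ in $\mathbb{C}^4$; on such $\Omega$ the principal branch of $\sqrt{\,\cdot\,}$ makes $\sqrt{z_3z_4}$ holomorphic, and the denominator $z_1-z_2$ does not vanish, so $F$ is holomorphic on $\Omega$.

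Next, pulling $F$ back along $\Gamma$ for real $t$ yields
\[
F(\Gamma(t)) \;=\; \frac{\sqrt{\gamma'(t)\,\overline{\gamma'(t)}}}{\gamma(t)-\overline{\gamma(t)}}\cdot 2i \;=\; \frac{|\gamma'(t)|}{\mathrm{Im}(\gamma(t))},
\]
which is exactly the standard infinitesimal hyperbolic length in the upper half-plane model, and is strictly positive on $[a,b]$; this verifies $F(J)\subset(0,+\infty)$. Theorem~\ref{th:1} then delivers directly that the hyperbolic arc-length function $S(t)=\int_a^t F(\Gamma(u))\,du$ is an analytic change of parameter with $S'=F\circ\Gamma>0$, so that the reparametrization $\delta(s)=\gamma(S^{-1}(s))$ of $\gamma$ by hyperbolic arc-length is itself an analytic curve with non-vanishing derivative, i.e.\ (locally) conformal, and global by the matching argument on sub-intervals.

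The only delicate point in this plan is the choice of branch of $\sqrt{z_3z_4}$ in a neighborhood of $J$, since a priori $z_3$ and $z_4$ are complex; this is handled by the observation that on $J$ the product $z_3z_4=|\gamma'(t)|^2$ lies on the positive real axis, so in some open neighborhood of $J$ the principal branch of the square root is single-valued and holomorphic. No other genuine obstacle arises, and the theorem follows as a corollary of Theorem~\ref{th:1}.
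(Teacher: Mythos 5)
Your proof is correct and takes essentially the same route as the paper, which also obtains this statement as an immediate corollary of Theorem~\ref{th:1} applied to the very same function $F(z_1,z_2,z_3,z_4)=\frac{\sqrt{z_3z_4}}{z_1-z_2}\,2i$, with $F(\Gamma(t))=|\gamma'(t)|/\mathrm{Im}(\gamma(t))>0$. The only (harmless) difference is in how the branch of the square root is secured: the paper prescribes the domain $\mathrm{Re}\,z_3>0$, $\mathrm{Re}\,z_4>0$ together with $\mathrm{Im}\bigl(\frac{z_1-z_2}{2i}\bigr)>0$, while you argue directly that $z_3z_4=|\gamma'(t)|^2\in(0,+\infty)$ on $J$ so the principal branch is holomorphic on a neighborhood of $J$ --- which is, if anything, slightly more robust.
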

\end{example}

%
%
%
%
%
%
%
%


\section{Analytic curves in $\mathbb{R}^n$}\label{s:n}
\begin{definition}
Let $I\subset \mathbb{R}$ be a nonempty interval of arbitrary type which is not a singleton.
We say that $\gamma:I\to\mathbb{R}^n$ is analytic if  the following two conditions are satisfied:
\begin{enumerate} \item For every $t\in I$, the derivative $\gamma'$ exists in $\mathbb{R}^n$ and $\gamma'(t)\not=0$.
\item For every $t_0\in I$, there exists $\delta=\delta_{t_{0}}>0$ and a sequence $a_m\in \mathbb{R}^n$ such that $\sum_{m=0}^\infty (t-t_0)^ma_m$ converges in $(t_0-\delta, t_0+\delta)$ 
  and $\gamma(t)=\sum_{m=0}^\infty (t-t_0)^ma_m$ for any $t\in I\cap (t_0-\delta, t_0+\delta)$.

\end{enumerate}
\end{definition}

Let $\gamma:I \to \mathbb{R}^n$ be an analytic curve, with components $\gamma =
(\gamma_1, \gamma_2, ...,\gamma_n )$; that is, 
\[\gamma(t)=(\gamma_1 (t), \gamma_2
(t),...,\gamma_n (t) ).\]
 Then for any given point $t_0\in I$, we have $\gamma'_j(t_0)\not=0$ for some $1\leq j\leq n$. Therefore $\gamma_j$ is injective in a neighborhood of $t_0$. Consequently, $\gamma$ is locally injective.

We consider the complexification $\mathbb{C}^n$ of $\mathbb{R}^n$, taking
\[\mathbb{R}^n=(x_1,\ldots,x_n)\]
and 
\[\mathbb{C}^n=(x_1+iy_1,\ldots,x_n+iy_n).\]

Each $\gamma_j$ can be extended on a neighborhood of $[a,b]$ in $\mathbb{C}$ to a holomorphic mapping with values in $\mathbb{C}=\{x_j+iy_j\}$. Taking the intersections of these neighborhoods for $i=1,\ldots, n$, we see that $\gamma$ extends to a holomorphic mapping (which we also call $\gamma$) from a neighborhood $U$ of $[a,b]\subset \mathbb{C}$ to $\mathbb{C}^n$. 

By choosing a smaller neighborhood of $[a,b]$, we can assume that the extension $\gamma$ is locally injective. Indeed, since $\gamma_{\vert I}$ is locally injective, we may restrict our attention to an interval $I_0$ on which the curve $\gamma$ is injective. If the extension of $\gamma$ from $I_0$ to a small neighborhood of $I_0$ were not injective, then there would exist a nested sequence of open sets $U_k$ whose intersection is $I_0$, and points $z_k\not=w_k$ in each $U_k$ such that $\gamma(z_k)=\gamma(w_k)$. Without loss of generality, we can assume that $z_k\to z\in I_0$ and $w_k\to w\in I_0$. Therefore, $\gamma(z)=\gamma(w)$. Since $\gamma$ is injective on $I_0$, $z=w$.  But $\gamma'_j(z)\not=0$ for some $1\leq j\leq n$. Therefore the extension of $\gamma_j$ is injective on a neighborhood of $z$, which implies that $\gamma$ is injective on a neighborhood of $z$. This contradicts the fact that for every $k$, we have $\gamma(z_k)=\gamma(w_k)$.

Let $\gamma:[a,b]\to\mathbb{R}^n$ be an analytic curve in $\mathbb{R}^n$ and let $N$ be a natural number. Set  
\[ \Gamma(t)= \big(\gamma(t),\gamma'(t),\gamma''(t),\ldots,\gamma^{(N)}(t)
\big)\]
and 
\[J=\Gamma([a,b])= \{\big(\gamma(t),\gamma'(t),\gamma''(t),\ldots,\gamma^{(N)}(t)
\big): t\in [a,b]\}\subset \mathbb{R}^{nN}.\]
Let $\Omega\subset \mathbb{C}^{nN}$ be an open subset containing $J$, where we have the identification
\[\mathbb{R}^m=\{(\mathrm{Re} z_1, \ldots,\mathrm{Re} z_m); \ (z_1,\ldots,z_m)\in\mathbb{C}^m\}\]
with $m=nN$.
Let $F:\Omega\to \mathbb{C}$ be a holomorphic function in $\Omega$ such that $F(J)\subset (0,+\infty)$. 
Finally, let us set
\[S(t)=\int_a^t F(\Gamma(t))dt, \ a\leq t\leq b.\]
The function $t\mapsto S(t)$ is continuous and bijective from $[a,b]$ to $[A,B]$, for some $A,B$ in $\mathbb{R}$, and it defines a new parametrization of the curve $\gamma$, namely, $\delta=\gamma\circ S^{-1}$, where $S^{-1}:[A,B] \to [a,b]$ is the inverse of $S$.

We have the following:

\begin{theorem}\label{th:2}
The function  $\delta$ is also analytic.
\end{theorem}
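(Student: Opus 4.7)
The plan is to repeat the argument of Theorem \ref{th:1} with only cosmetic changes, since the $\mathbb{R}^n$-valued setting is actually slightly simpler than the $\mathbb{C}$-valued one: the components $\gamma_j$ are real on the real interval, so their holomorphic extensions $\gamma_j(z)$ automatically carry the required derivatives, and there is no need for the conjugate-variable bookkeeping used in Theorem \ref{th:1}.

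First, I would use the local injectivity of $\gamma$ to replace $[a,b]$ by a subinterval on which $\gamma$ is injective; by the discussion immediately preceding the statement, $\gamma$ then extends to an injective holomorphic map from a neighborhood $U \subset \mathbb{C}$ of $[a,b]$ into $\mathbb{C}^n$. Differentiating, I get holomorphic extensions $\gamma^{(k)} : U \to \mathbb{C}^n$ for $k = 0, 1, \ldots, N$, and hence a holomorphic map
\[
\Gamma(z) = \bigl(\gamma(z), \gamma'(z), \ldots, \gamma^{(N)}(z)\bigr) : U \to \mathbb{C}^{nN}
\]
which coincides on $[a,b]$ with the original $\Gamma$. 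By continuity and the fact that $\Gamma([a,b]) = J \subset \Omega$, I may shrink $U$ so that $\Gamma(U) \subset \Omega$.

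Second, on this shrunken $U$ the composition $G(z) = F(\Gamma(z))$ is holomorphic and restricts to $F \circ \Gamma$ on $[a,b]$, where it takes strictly positive values. Choosing a convex subneighborhood $V_1 \subset U$ of $[a,b]$ (for instance an open rectangle), $G$ admits a holomorphic primitive on $V_1$, which I normalize to vanish at $a$ and still call $S$. This function extends the original real-variable $S$ holomorphically, and $S'(t) = G(t) > 0$ on $[a,b]$, so $S$ is locally conformal throughout a neighborhood of $[a,b]$.

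Third, exactly as in the proof of Theorem \ref{th:1} and in \cite{NP1, NP2}, I can shrink $V_1$ further to a rectangle on which $S$ is injective, using the strict monotonicity of $S$ on $[a,b]$ together with the local injectivity furnished by $S' \neq 0$ and a compactness argument. Then $S^{-1}$ is holomorphic on the open neighborhood $S(V_1)$ of $[A,B]$, and $\delta = \gamma \circ S^{-1}$ is a composition of holomorphic maps, with $\delta'(s) = \gamma'(S^{-1}(s)) / S'(S^{-1}(s)) \neq 0$. Hence $\delta$ is an analytic curve. The only nontrivial technical point is the passage from injectivity of $S$ on the real interval to injectivity on a full rectangular complex neighborhood; since this step is already carried out verbatim in \cite{NP1, NP2}, the whole adaptation is essentially routine.
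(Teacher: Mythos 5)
Your proposal is correct and follows exactly the route the paper intends: the paper's own ``proof'' of Theorem~\ref{th:2} is just the remark that the argument of Theorem~\ref{th:1} carries over, with the simplification (noted in the subsequent Remark) that the real-valued components $\gamma_j$ satisfy $\overline{\gamma_j(\overline z)}=\gamma_j(z)$, so no conjugate variables are needed. You have simply written out in full the same adaptation the authors describe in one sentence.
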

The proof of Theorem \ref{th:2} is similar to that of Theorem \ref{th:1}. It can be done first in the case where $\gamma$ is injective and the result follows easily in the general case where $\gamma$ is locally injective

\begin{remark}  Since in this section $\gamma_j$ takes real values for t real, its holomorphic extension, denoted also by $\gamma_j,$ satisfies $\overline{\gamma_j(\overline z)} = \gamma_j(z);$ that is why in this section we do not need extra variables for the conjugates of the derivatives of $\gamma_j$ evaluated at $\overline z.$ 
\end{remark}

Using a similar technique we can obtain a theorem similar to Theorem 3.2, but for analytic curves in $\mathbb{C}^n.$


\section{Extendability with respect to spherical arc-length}\label{s:ext}

Consider an analytic curve $\gamma:(a,b)\to \mathbb{C}$ ($-\infty\leq a < b\leq +\infty$) under its maximal extension form with respect to arc-length.

If the set of limit points of $\gamma(t)$ when $t\to b$ has more than one element, then, obviously, any reparametrization we choose for $\gamma$ will not be extendable beyond $b$.

Suppose now that the set of limit points of $\gamma(t)$ when $t\to b$  is a singleton. If this point is in $\mathbb{C}$, then no matter what parametrization we choose for the curve, it will not be extendable beyond $b$, since $\gamma$ is endowed with the maximal parametrization by arc-length (Theorem 3.1 of \cite{NP2}).

The remaining case is when $\gamma(t)$ converges to $\infty$ as $t\to b$ (that is, for any compact subset $K$ of the complex plane, we have $\gamma(t)\cap K=\emptyset$ for all $t$ large enough). Thus, we have to consider the case where $b=+\infty$. In this case, using the arc-length parameter, the curve cannot be extended through $b$ since $b$ is not in $\mathbb{R}$ (and arc-length becomes infinite). But we can use instead the spherical arc-length, and it may happen that the spherical arc-length is finite. Thus, we consider the curve $\displaystyle \frac{1}{\gamma(t)}$ and we examine whether this curve is extendable using the spherical arc-length parametrization. This may happen indeed, and we give an example.

Consider a straight line, say $\mathbb{R}$, and let $\gamma:(-\infty,+\infty)\to \mathbb{C}$ be the inclusion map $\gamma(t)=t$. Then $\gamma$ is parametrized by arc-length. We have $\lim_{t\to +\infty}\gamma(t)=\infty$. 

By an easy computation, the spherical arc-length of the real axis is equal to $\pi$. 
Indeed, we have in the case considered $\vert \gamma'(t)\vert=1$, and the spherical length becomes
\[\int_{-\infty}^{+\infty} \frac{dt}{1+t^2}=\arctan \big]_{-\infty}^{+\infty} =\frac{\pi}{2}-(-\frac{\pi}{2})=\pi.\]

Thus, the parametrization of $\gamma$ by the arc length parameter on $(-\infty, +\infty)$  can be replaced by the spherical arc-length parametrization as a function defined on the interval $(-\frac{\pi}{2},\frac{\pi}{2})$.  The corresponding function is
\[\delta:(-\frac{\pi}{2},\frac{\pi}{2})\to \mathbb{R}\subset \mathbb{C}\]
given by $\delta(u)=\tan u$. 
The limit of the map $\displaystyle \frac{1}{\gamma}$ when $t\to+\infty$ is obviously $0$. 
We have $\lim_{u\to -\frac{\pi}{2}}\frac{1}{\delta(u)}=0$.
 
Thus, $\delta$ can be continued to a second real line: when the initial real line where $\delta$ is defined approaches $+\infty$, then the second real line will approach $-\infty$. The domain of definition using the spherical arc-length parametrization, instead of being $(-\frac{\pi}{2},\frac{\pi}{2})$ will become $(-\frac{\pi}{2},\frac{3\pi}{2})$. 

Continuing in this way to the right and to the left, we find that the domain of definition of $\delta$ is $(-\infty,+\infty)$. The image of this curve $\gamma$ is winding countably many times around the real axis, equipped with the spherical parmetrization by arc-length.

This gives an example of curves which are non-extendable using the Euclidean arc-length. The introduction of spherical arc-length allows to go over the point $\infty$. 

We note that the spherical arc-length parameter for the curves $\gamma$
and $\frac{1}{\gamma}$ is the same because the map $z\mapsto 1/z$  is an isometry for spherical arc-length.

Finally, we note that the curve $\delta:(-\infty,+\infty)\to \mathbb{C}\cup \{\infty\}$ is not extendable, no matter what parametrization we use, because the set of limit points at $+\infty$ is $\mathbb{R}\cup \{\infty\}$, which is not a singleton.

From the example of the real line that we gave, one can see that it may happen that an analytic curve $\gamma:(c,+\infty)\to\mathbb{C}$ ($c>0$) satisfying $\lim_{t\to+\infty}\gamma(t)=\infty$ extends analytically to $(c,+\infty)\cup\{\infty\}\cup (-\infty, c')$ $(c'<0$) where we identify the points $+\infty$, $\infty$ and $-\infty$. For this to hold, it suffices that $\frac{1}{\gamma(t)}$ extends analytically from $(0,1/c)$ to $(1/c',1/c)$ with $\frac{1}{\gamma{(\frac{1}{t})}}\big\vert_{t=0}=0$. We conclude that the point $\infty$ can be in the interior of the domain where a conformal parameter  for an analytic curve in $\mathbb{C}\cup\{\infty\}$ varies. 

Our previous definitions may be unified as follows:
\begin{definition}
A curve $\gamma$ in $\mathbb{C}\cup\{\infty\}$ is said to be analytic with respect to a parametrization with parameter $t$ if for every value $t_0\in\mathbb{R}\cup\{\infty\}$ in the range of the parameter $t$ there exists an open set $\Delta$ in the space where the parameter varies (which may be an interval, the union of two intervals with the point $\infty$ common to them, etc.) containing a point $t_0$, an open set $V$ in $\mathbb{C}\cup\{\infty\}$, such that $t_0\in\Delta\subset V$, and an injective meromorphic function $\phi:V\to \mathbb{C}\cup\{ \infty\}$ such that $\phi_{\vert\Delta}=\gamma_{\vert\Delta}$. 
\end{definition}

The example of the real line $\gamma(t)=t$, $t\in\mathbb{R}$, shows that for some parametrization of $\gamma$ the point $\infty=+\infty=-\infty$ can be in the interior of the set where the parameter $t$ can be extended and still give an analytic curve. However, we claim that this cannot happen if the parameter $t$ is the spherical arc-length. The reason is that spherical arc-length is a global conformal parameter for any analytic curve in $\mathbb{C}\cup\{\infty\}$. Indeed, if $\gamma$ can be extended through the point $\infty=+\infty=-\infty$, its spherical length is equal to that of $\frac{1}{\gamma(\frac{1}{t})}$ around $0$, which must be finite. This contradicts the fact that $t$ is the spherical arc-length and approaches infinity at an interior point where the parameter varies.

From the previous discussion, we deduce the following
\begin{theorem}
Spherical arc-length is a global conformal parameter for any analytic curve in $\mathbb{C}\cup\{\infty\}$. 
\end{theorem}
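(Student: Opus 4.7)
The plan is to reduce the statement locally to the corresponding result for analytic curves in $\mathbb{C}$, already proved in \S\ref{s:analytic}, and then to patch the local conformal parametrizations into a global one. The essential new ingredient is the treatment of points where $\gamma$ takes the value $\infty$; there I rely on the fact that the involution $z\mapsto 1/z$ is an isometry for the spherical metric, which is precisely the observation highlighted in the discussion preceding the theorem.

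Fix $t_0$ in the parameter domain of $\gamma$. If $\gamma(t_0)\in\mathbb{C}$, then by continuity $\gamma$ remains in $\mathbb{C}$ on some open neighborhood of $t_0$ (since $\mathbb{C}$ is open in $\mathbb{C}\cup\{\infty\}$), and on that neighborhood $\gamma$ is a classical analytic curve in $\mathbb{C}$ in the sense of \S\ref{s:analytic}; the spherical arc-length theorem there yields a local conformal reparametrization by spherical arc-length. If instead $\gamma(t_0)=\infty$, let $\phi:V\to\mathbb{C}\cup\{\infty\}$ be an injective meromorphic local extension supplied by the definition. Since $\phi$ is injective with $\phi(t_0)=\infty$, the point $t_0$ must be a simple pole of $\phi$, so $\tilde\phi:=1/\phi$ is injective and holomorphic on a neighborhood of $t_0$ with nonvanishing derivative there. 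Hence $\tilde\gamma:=1/\gamma$ is a classical analytic curve in $\mathbb{C}$ near $t_0$ (passing through $0$), and \S\ref{s:analytic} supplies a local conformal parametrization of $\tilde\gamma$ by its spherical arc-length. Since $z\mapsto 1/z$ preserves the spherical metric, the spherical arc-length elements of $\gamma$ and $\tilde\gamma$ coincide, and this parametrization is at the same time a local conformal spherical arc-length parametrization of $\gamma$.

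It remains to glue. Spherical arc-length is additive along subarcs, so on any overlap of two local charts the locally defined spherical arc-length functions differ only by an additive constant; they therefore assemble into a single strictly increasing real-analytic function $s(t)$ on the whole parameter domain with nowhere vanishing derivative, whose inverse yields a global conformal reparametrization of $\gamma$ by spherical arc-length. Finally, $s$ cannot reach the value $\infty$ as an interior parameter value: if it did, then on a local chart near that point the inversion $1/\gamma(1/t)$ would, by the argument above, represent the same arc of $\gamma$ as a classical analytic curve in $\mathbb{C}$ with necessarily finite spherical length on every compact subinterval, contradicting the assumption that $s$ itself tends to $\infty$ there. The main obstacle I expect is the compatibility of the direct and the inverted charts in a neighborhood of a transition where $\gamma$ passes through $\infty$, but this is exactly what the spherical isometry property of $z\mapsto 1/z$ guarantees, so the gluing is unobstructed.
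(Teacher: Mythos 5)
Your proposal is correct and follows essentially the same route as the paper: away from $\infty$ one invokes the spherical arc-length theorem of \S\ref{s:analytic}, and at a value $\infty$ one passes to $1/\gamma$, using that $z\mapsto 1/z$ is an isometry for the spherical metric, exactly as in the discussion preceding the theorem. Your write-up merely makes the local-to-global gluing and the simple-pole observation explicit, which the paper leaves implicit.
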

Finally, we mention that all the previous definitions are special cases of the definition of analytic curve on a Riemann surface, a subject we shall develop elsewhere.


\end{document}